\theoremstyle{plain}
\newtheorem{theorem}{Theorem}
\numberwithin{equation}{section}
\begin{document}

\title {Bessel function asymptotics: a relation due to Lommel}

\date{}

\author[P.L. Robinson]{P.L. Robinson}

\address{Department of Mathematics \\ University of Florida \\ Gainesville FL 32611  USA }

\email[]{paulr@ufl.edu}

\subjclass{} \keywords{}

\begin{abstract}

We show that the Bessel function asymptotic relation $J_{\nu}^2(z) + J_{\nu + 1}^2 (z) \sim 2/(\pi z)$ of Lommel is valid when $z$ is real but can fail otherwise. 

\end{abstract}

\maketitle

\medbreak 

Our aim here is to elucidate and elaborate upon an asymptotic relation involving the squares of two Bessel functions that have orders differing by unity. Attention is 
drawn to this relation by Watson on page 200 of his magnificent `{\it Treatise on the theory of Bessel functions}' [1] as follows. 

\begin{quote} 
`The reader should notice that 
$$J_{\nu}^2(z) + J_{\nu + 1}^2 (z) \sim 2/(\pi z),$$
a formula given by Lommel, {\it Studien}, p. 67.'
\end{quote} 
\medbreak
\noindent	

\noindent 
In the study [2] of Lommel to which Watson refers, the relation actually appears on page 65 and is presented as an equality, with a verbal comment on its asymptotic nature. The symbol $\sim$ can signify various kinds of asymptotic relation, and care must be exercised in its use. Its use in the relation of Lommel is a case in point: as we shall see below, when $\sim$ is given its most familiar interpretation, the Lommel relation is not true in the general cut-plane context of [1].  

\medbreak 

We shall begin by recalling the definition and asymptotic development of a Bessel function (of the first kind). The Bessel function $J_{\nu}$ of complex order $\nu$ is defined by 
$$J_{\nu} (z) = (\tfrac{1}{2} z)^{\nu} \sum_{m = 0}^{\infty} (-)^m \frac{(\tfrac{1}{2} z)^{2 m}}{\Gamma(\nu + m + 1) \: m!}$$
where $z$ is any point in the complex plane cut along the negative real half-line $(- \infty, 0]$ and where the power $z^{\nu}$ is assigned its principal value (at least initially). The asymptotic expansion of $J_{\nu}(z)$ for $|z|$ large is as follows: when the square-root has its principal value,
$$(\tfrac{1}{2} \pi z )^{1/2} J_{\nu} (z) = \cos(z - \tfrac{1}{2} \nu \pi - \tfrac{1}{4} \pi) C(z) - \sin(z - \tfrac{1}{2} \nu \pi - \tfrac{1}{4} \pi) S(z)$$
where 
$$C(z) \sim \sum_{m = 0}^{\infty} (-)^m \frac{(\nu, 2m)}{(2 z)^{2 m}}$$
and 
$$S(z) \sim \sum_{m = 0}^{\infty} (-)^m \frac{(\nu, 2m + 1)}{(2 z)^{2 m + 1}}$$
and where 
$$(\nu, m) = \frac{(4 \nu^2 - 1^2) (4 \nu^2 - 3^2) \cdots (4 \nu^2 - (2 m - 1)^2)}{2^{2 m} m!}$$
is the customary Hankel symbol. In these asymptotic relations, the symbol $\sim$ is used in one of the senses customary for asymptotic power series: when $p$ is any positive integer, the difference  
$$ C(z) - \sum_{m = 0}^{p - 1} (-)^m \frac{(\nu, 2m)}{(2 z)^{2 m}}$$
is $O(z^{- 2 p})$ in the sense that 
$$z^{2 p} \Big( C(z) - \sum_{m = 0}^{p - 1} (-)^m \frac{(\nu, 2m)}{(2 z)^{2 m}}\Big)$$ 
remains bounded as $|z| \rightarrow \infty$; similarly for $S$. For a detailed account of these asymptotics, see pages 196-199 of [1]. 

\medbreak 

In particular, we have
$$(\tfrac{1}{2} \pi z )^{1/2} J_{\nu} (z) = c(z) \{ 1 + C_0 (z) \} - s(z)  S_0 (z) $$
where 
$$c(z) := \cos (z - \tfrac{1}{2} \nu \pi - \tfrac{1}{4} \pi)$$
$$s(z) := \sin( z - \tfrac{1}{2} \nu \pi - \tfrac{1}{4} \pi) $$
\medbreak
\noindent
and where the terms $C_0 (z)$ and $S_0 (z)$ are $O( z^{-1})$ or better. 

\medbreak 

Increase the order by unity, from $\nu$ to $\nu + 1$: on account of the identities
$$\cos(z - \tfrac{1}{2} (\nu + 1) \pi - \tfrac{1}{4} \pi) = c( z - \tfrac{1}{2} \pi) = s(z)$$
$$\sin( z - \tfrac{1}{2} (\nu + 1)\pi - \tfrac{1}{4} \pi) = s( z - \tfrac{1}{2} \pi) = - c(z)$$
we have 
$$(\tfrac{1}{2} \pi z )^{1/2} J_{\nu + 1} (z) = s(z) \{ 1 + C_1 (z) \} + c(z) S_1 (z)$$
\medbreak
\noindent
where $C_1(z)$ and $S_1(z)$ are $O(z^{-1})$ likewise. 

\medbreak 

Square and add: as $c(z)^2 + s(z)^2 = 1$ we deduce that 
$$\tfrac{1}{2} \pi z  \{ J_{\nu}^2(z) + J_{\nu + 1}^2 (z) \} - 1 = A(z) c(z)^2 + 2 H(z) c(z) s(z) + B(z) s(z)^2$$
where 
$$A(z) = C_0(z)^2 + 2 C_0(z) + S_1(z)^2$$
$$H(z) = C_1(z) S_1(z) - C_0(z) S_0(z) + S_1(z) - S_0(z)$$
$$B(z) = C_1(z)^2 + 2 C_1(z) + S_0(z)^2.$$

\medbreak 

We are now prepared to discuss the relation of Lommel. 

\medbreak 

\begin{theorem} \label{real}
For any complex order $\nu$, the difference 
$$\tfrac{1}{2} \pi t  [ J_{\nu}^2(t) + J_{\nu + 1}^2 (t) ] - 1 $$
is $O(t^{-1})$ as the positive real $t$ tends to infinity. 
\end{theorem}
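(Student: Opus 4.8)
The plan is to start from the identity established immediately before the theorem, namely
$$\tfrac{1}{2} \pi z \{ J_{\nu}^2(z) + J_{\nu + 1}^2 (z) \} - 1 = A(z) c(z)^2 + 2 H(z) c(z) s(z) + B(z) s(z)^2,$$
and to control its right-hand side along the positive real axis. The first step is to observe that each of $A$, $B$, $H$ is $O(z^{-1})$. Since $C_0, S_0, C_1, S_1$ are all $O(z^{-1})$, every quadratic term appearing in $A = C_0^2 + 2 C_0 + S_1^2$, in $B = C_1^2 + 2 C_1 + S_0^2$, and in $H = C_1 S_1 - C_0 S_0 + S_1 - S_0$ is automatically $O(z^{-2})$, and the only linear contributions $2 C_0$, $2 C_1$, $S_1 - S_0$ are $O(z^{-1})$. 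Hence $A, B, H$ are each $O(z^{-1})$, and in particular $O(t^{-1})$ as the positive real $t$ tends to infinity.

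The crux is then to show that $c(t)$ and $s(t)$ remain bounded as $t \to \infty$, even though $\nu$ may be complex. Writing $\nu = \alpha + i \beta$, the argument $t - \tfrac{1}{2}\nu\pi - \tfrac{1}{4}\pi$ has real part $t - \tfrac{1}{2}\alpha\pi - \tfrac{1}{4}\pi$ but constant imaginary part $-\tfrac{1}{2}\beta\pi$. Using $\cos(x + iy) = \cos x \cosh y - i \sin x \sinh y$ together with the companion formula for the sine, I would conclude that $|c(t)|$ and $|s(t)|$ are each bounded by $\cosh(\tfrac{1}{2}\beta\pi) + \sinh(\tfrac{1}{2}|\beta|\pi)$, a constant depending only on $\operatorname{Im}\nu$ and independent of $t$. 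This is exactly the feature that breaks down when $z$ tends to infinity in a direction off the real axis: there the imaginary part of the argument grows like $\operatorname{Im} z$, so $c(z)$ and $s(z)$ grow like $e^{|\operatorname{Im} z|}$ and can swamp the $O(z^{-1})$ decay of $A, B, H$.

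Combining the two steps finishes the argument: along the positive real axis the right-hand side is a sum of products of the bounded quantities $c(t), s(t)$ with the $O(t^{-1})$ quantities $A(t), H(t), B(t)$, and is therefore itself $O(t^{-1})$. I expect the boundedness of $c(t)$ and $s(t)$ to be the only genuine obstacle; it is the single place where the hypothesis that $t$ is real rather than a general point of the cut plane is used, and it is precisely the point on which the validity of Lommel's relation turns.
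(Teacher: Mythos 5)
Your proposal is correct and follows the same route as the paper: it uses the identity involving $A$, $H$, $B$ together with the boundedness of $c(t)$ and $s(t)$ for real $t$, established via the same addition formulas for cosine and sine with constant imaginary part $-\tfrac{1}{2}\pi\operatorname{Im}\nu$. Your additional explicit verification that $A$, $H$, $B$ are $O(t^{-1})$ only spells out what the paper leaves implicit in the displayed formulae preceding the theorem.
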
 

\begin{proof} 
Write $ - \tfrac{1}{2} \nu \pi - \tfrac{1}{4} \pi = a + i b$ with $a$ and $b$ real and fixed. It follows that 
$$c(t) = \cos ( t + a + i b) = \cos (t + a) \cosh (b) - i \sin (t + a) \sinh (b)$$
and 
$$s(t) = \sin (t + a + i b) = \sin (t + a) \cosh (b) + i \cos (t + a) \sinh (b)$$
are bounded as $t$ varies over the reals. The theorem now follows at once from the formulae displayed immediately prior to its statement. 
\end{proof} 

\medbreak 

Consequently, the Lommel relation 
$$J_{\nu}^2(z) + J_{\nu + 1}^2 (z) \sim 2/(\pi z)$$
is valid for each complex order $\nu$ when $z$ tends to infinity through (positive) {\it real} values; here, $\sim$ has the familiar meaning according to which the two sides to the relation have ratio approaching unity. We remark that at the outset of [2] Lommel declares an interest in {\it real} values of $\nu$ but does not restrict $z$ to be real; however, his relation can fail without some such restriction, as we now proceed to demonstrate. 

\medbreak 

When we allow $z$ to pass to infinity in an other-than-real direction, the coefficients $c(z)$ and $s(z)$ in the asymptotic formulae displayed prior to Theorem \ref{real} can grow exponentially, thereby counteracting the power decay of $A(z), H(z), B(z)$. A single example will suffice for the demonstration: we take $\nu = \tfrac{1}{2}$ (real) and let $z$ run to infinity up the imaginary axis. 

\medbreak 

Recall that if the complex number $z$ lies in the cut plane then 
$$(\tfrac{1}{2} \pi z)^{1/2}  J_{1/2} (z) = \sin (z)$$
and 
$$(\tfrac{1}{2} \pi z)^{1/2}  J_{3/2} (z) = \frac{\sin (z)}{z} - \cos (z)$$
whence 
$$(\tfrac{1}{2} \pi z) [ J_{1/2}^2(z) + J_{3/2}^2 (z) ] = \frac{\sin^2 (z)}{z^2} - 2 \frac{\sin(z) \cos(z)}{z} + 1.$$
In this case, taking $z = t$ to be real yields a concrete version of Theorem \ref{real}, whereas taking $z = i t$ to be pure imaginary yields the following. 

\medbreak 

\begin{theorem} \label{imag}
$$\lim_{t \rightarrow \infty} \frac{2 t}{e^{2 t}}\Big\{(\tfrac{1}{2} \pi i t)[ J_{1/2}^2(i t) + J_{3/2}^2 (i t) ] - 1 \Big\} = - 1.$$
\end{theorem}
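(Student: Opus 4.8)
The plan is to exploit the fact that, for $\nu = \tfrac{1}{2}$, the relevant quantity is given not merely asymptotically but by an exact closed form, valid throughout the cut plane. Since the point $z = i t$ with $t > 0$ lies in the cut plane, the identity displayed immediately above applies there verbatim; subtracting $1$ from it gives
$$(\tfrac{1}{2} \pi z)[ J_{1/2}^2(z) + J_{3/2}^2 (z) ] - 1 = \frac{\sin^2 (z)}{z^2} - 2 \frac{\sin(z) \cos(z)}{z}.$$

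First I would set $z = i t$ and pass from circular to hyperbolic functions using $\sin(i t) = i \sinh(t)$ and $\cos(i t) = \cosh(t)$. Because $\sin^2(i t) = - \sinh^2(t)$ and $(i t)^2 = - t^2$, the first term on the right becomes $\sinh^2(t)/t^2$; and because $\sin(i t)\cos(i t) = i \sinh(t)\cosh(t)$ while the denominator is $i t$, the second term becomes $- 2 \sinh(t)\cosh(t)/t$. Thus
$$(\tfrac{1}{2} \pi i t)[ J_{1/2}^2(i t) + J_{3/2}^2 (i t) ] - 1 = \frac{\sinh^2 (t)}{t^2} - 2 \frac{\sinh(t) \cosh(t)}{t}.$$

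Next I would multiply through by the normalizing factor $2 t / e^{2 t}$ and evaluate the limit of each resulting term separately, using $2 \sinh^2(t) = \cosh(2 t) - 1$ and $2 \sinh(t)\cosh(t) = \sinh(2 t)$ together with $\cosh(2 t) \sim \sinh(2 t) \sim \tfrac{1}{2} e^{2 t}$. The contribution of the first term is $2 \sinh^2(t)/(t\, e^{2 t})$, which is $O(1/t)$ and so tends to $0$; the contribution of the second term is $- 4 \sinh(t)\cosh(t)/e^{2 t} = - 2 \sinh(2 t)/e^{2 t}$, which tends to $- 1$. Adding the two gives the stated limit $0 + (-1) = -1$.

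The hard part is essentially nonexistent: once the exact formula is secured, the argument is an elementary limit computation. The only point that repays attention is bookkeeping of which term dominates after normalization. Both $\sinh^2(t)$ and $\sinh(t)\cosh(t)$ grow like $e^{2 t}$, but the squared-sine term carries an extra $1/t$ and is therefore extinguished by the factor $2 t / e^{2 t}$, leaving the cross term $- 2 \sinh(t)\cosh(t)/t$ as the sole survivor. This is exactly the mechanism flagged in the paragraph preceding the theorem, whereby exponential growth of the trigonometric coefficients overwhelms the algebraic decay and defeats the naive form of Lommel's relation.
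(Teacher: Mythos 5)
Your proposal is correct and follows essentially the same route as the paper: both start from the exact closed form for $(\tfrac{1}{2}\pi z)[J_{1/2}^2(z)+J_{3/2}^2(z)]$, substitute $z = it$ to obtain $\sinh^2(t)/t^2 - \sinh(2t)/t$, and then multiply by $2t/e^{2t}$ and pass to the limit. The only cosmetic difference is that the paper rewrites the normalized expression as the single formula $(1-e^{-2t})^2/(2t) - 1 + e^{-4t}$ before taking the limit, whereas you treat the two terms separately; the computations agree.
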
 

\begin{proof} 
From the formula displayed immediately before the theorem, it follows that if $t$ is a positive real number then 
$$(\tfrac{1}{2} \pi i t) [ J_{1/2}^2(i t) + J_{3/2}^2 (i t) ] - 1 = \frac{\sinh^2 (t)}{t^2} - \frac{\sinh(2 t)}{t}$$
so that  
$$\frac{2 t}{e^{2 t}}\Big\{(\tfrac{1}{2} \pi i t)[ J_{1/2}^2(i t) + J_{3/2}^2 (i t) ] - 1 \Big\}  = \frac{(1 - e^{- 2 t})^2}{2 t} - 1 + e^{- 4 t}$$
\medbreak
\noindent
whence passage to the limit concludes the proof. 

\end{proof} 

\medbreak 

Thus 
$$\Big\{ \frac{J_{1/2}^2(i t) + J_{3/2}^2 (i t)}{\tfrac{2}{\pi i t}} - 1 \Big\} \sim - \frac{e^{2 t}}{2 t}$$
\medbreak
\noindent
in the sense that the two sides have ratio tending to unity as $t$ tends to infinity; so the Lommel relation fails in quite spectacular fashion. 

\medbreak 

In fact, for the Bessel functions of Theorem \ref{imag} the Lommel relation fails whenever $z$ tends to infinity along {\it any} non-real ray through the origin. Let $z = t e^{i \theta}$ with $t > 0$ variable and $0 < \theta < \pi$ fixed: an argument along the lines of that for Theorem \ref{imag} shows that 
$$(\tfrac{1}{2} \pi z)  [ J_{1/2}^2(z) + J_{3/2}^2 (z) ] - 1 = \frac{e^{- 2 i z}}{2 z} F(t, \theta) = \frac{e^{2 t \sin \theta - 2 i t \cos \theta}}{2t e^{i \theta}} F(t, \theta)$$
where $F(t, \theta) \rightarrow -i$ as $t$ tends to infinity, so the Lommel relation is violated as $z \rightarrow \infty$ along the ray $\arg z = \theta$ with $0 < \theta < \pi$ since there $\sin \theta > 0$; extracting instead the factor $e^{2 i z}/{2 z}$ shows that the Lommel relation is likewise violated along rays in the lower half-plane. In short, for $\nu = \tfrac{1}{2}$ and along rays through the origin, the Lommel relation is valid {\it only} when the argument $z$ is real.   
\medbreak 

Naturally, in Theorem \ref{imag} we choose $\nu$ to be half an odd integer because $J_{\nu}(z)$ is then an elementary function of $z$ and the argument is  elementary as a result. Of course, we do not find a counterexample to the Lommel relation by choosing $\nu = - \tfrac{1}{2}$, for then the Lommel relation certainly holds in its original form, with true equality: 
$$J_{ - 1/2}^2(z) + J_{1/2}^2 (z) = \frac{2}{\pi z}.$$

\medbreak 

Finally, there is one (largely figurative) sense of the symbol $\sim$ according to which the Lommel relation {\it does} hold for any complex $\nu$ and throughout the cut plane. It holds at the level of leading terms for the Bessel functions: the leading terms in the asymptotic expansions of $J_{\nu}(z)$ and $J_{\nu + 1}(z)$ are $(\tfrac{2}{\pi z})^{1/2} c(z)$ and $(\tfrac{2}{\pi z})^{1/2} s(z)$ respectively; these square and add to $\frac{2}{\pi z}$ precisely. Indeed, Lommel sets up his asymptotic relation in just this way on page 65 of [2].

\medbreak

\bigbreak 

\begin{center} 
{\small R}{\footnotesize EFERENCES}
\end{center} 
\medbreak 

[1] G.N. Watson, {\it A Treatise on the theory of Bessel functions}, Cambridge University Press, Second Edition (1944). 

\medbreak 

[2] E.C.J. von Lommel, {\it Studien \"uber die Bessel'schen Functionen}, Teubner, Leipzig (1868). 

\medbreak

\end{document}